\title{\Large 
On the maximum value of conflict-free verex-connection number of graphs\thanks{Research supported by NSFC (No. 11571294)}}
\author{ {Zhenzhen Li, Baoyindureng Wu \footnote{Corresponding author.
Email: baoywu@163.com (B. Wu) }}\\
\small  College of Mathematics and System Sciences, Xinjiang
University \\ \small  Urumqi, Xinjiang 830046, P.R.China \\
}
\date{}
\newtheorem{theorem}{Theorem}[section]
\newtheorem{lemma}[theorem]{Lemma}
\newtheorem{conjecture}[theorem]{Conjecture}
\begin{document}
\maketitle {\small \noindent{\bfseries Abstract} A path in a vertex-colored graph is called {\it conflict-free} if there is a color used on exactly one of its vertices. A vertex-colored graph is
said to be {\it conflict-free vertex-connected} if any two vertices of the graph
are connected by a conflict-free path. The {\it conflict-free vertex-connection number},
denoted by $vcfc(G)$, is defined as the smallest number of colors required to make $G$
conflict-free vertex-connected. Li et al. \cite{Li} conjectured that for a connected graph $G$ of order $n$, $vcfc(G)\leq vcfc(P_n)$. We confirm that the conjecture is true and pose a a relevant conjecture concerning the conflict-free connection number introduced by Czap et al. in \cite{Czap}.
\\
{\bfseries Keywords}: Conflict-free connection; Conflict-free vertex-connection; Tree

\section {\large Introduction}
We consider simple, finite and undirected graphs only, and refer to the book \cite{Bondy} for undefined notation and terminology. Let $G=(V,E)$ be a finite graph with vertex set $V$ and edge set
$E$. The {\it size} of $G$, denoted by $e(G)$, is $|E|$. The {\it degree}
of a vertex $v$, denoted by $d_G(v)$, is the number of edges which
are incident with $v$ in $G$. As usual, $\delta(G)$ and $\Delta(G)$
denote the minimum degree and the maximum degree of $G$,
respectively. A subgraph $H$ of $G$ is a {\it spanning subgraph} of $G$ if $V(H)=V(G)$.
We use $K_n$, $P_n$, and $K_{1, n-1}$ to denote the complete graph, the path, and the star of order $n$, respectively.

Very recently, Czap et al. introduced the concept of conflict-free connection in \cite{Czap}. A path in an edge-colored graph is called {\it conflict-free} if there is a color used on exactly one of its edges. An edge-colored graph is said to be {\it conflict-free connected} if any two vertices of the graph are connected by a conflict-free path. The {\it conflict-free connection number} of a connected graph, denoted by $cfc(G)$, is defined a the smallest number of colors required to make $G$ conflict-free connected.

Motivated by the above mentioned concepts, as a natural counterpart of a conflict-free connection number, Li et al. \cite{Li} introduced the concept of conflict-free vertex-connection number. A path in a vertex-colored graph is called {\it conflict-free} if there is a color used on exactly one of its vertices. A vertex-colored graph is
said to be {\it conflict-free vertex-connected} if any two vertices of the graph
are connected by a conflict-free path. The {\it conflict-free vertex connection number},
denoted by $vcfc(G)$, is defined as the smallest number of colors required to make $G$
conflict-free vertex-connected. Note that for a nontrivial connected graph $G$ of order $n$, \begin{equation}
2\leq vcfc(G)\leq n
\end{equation}

Li et al. determined the conflict-free vertex connection number of almost all graphs by showing the following result.

\begin{theorem}( Li et al. \cite{Li})
Let $G$ be a connected graph $G$ of order at least three. Then $vcfc(G)=2$ if and only if $G$ is 2-connected or it has only one cut vertex.
\end{theorem}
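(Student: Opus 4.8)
The plan is to prove both implications through a single structural device: in a $2$-connected graph one can route a path between any two vertices through any prescribed third vertex, whereas the presence of \emph{two} cut vertices creates a double bottleneck that no $2$-coloring can resolve. Throughout I use the lower bound $vcfc(G)\ge 2$ noted above, so in each case it suffices to decide whether $vcfc(G)\le 2$.

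For sufficiency (the ``if'' direction) I would first isolate the routing lemma: \emph{if $H$ is $2$-connected with $|V(H)|\ge 3$, then for any three distinct vertices $x,y,v$ there is an $x$--$y$ path through $v$.} To see this, add a new vertex $z$ adjacent to $x$ and $y$; the resulting graph is again $2$-connected, so $v$ and $z$ lie on a common cycle, and deleting $z$ from that cycle leaves an $x$--$y$ path through $v$. Granting the lemma, if $G$ is $2$-connected I color one vertex $v$ with color $2$ and all others with color $1$: any two vertices are joined by a path through $v$ (by the lemma, or trivially if $v$ is an endpoint), and color $2$ occurs exactly once on such a path, so it is conflict-free and $vcfc(G)=2$. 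If instead $G$ has a unique cut vertex $v$, then every block contains $v$, and the same coloring works: two vertices in distinct blocks are joined through $v$, while two vertices of a common block $B$ are joined through $v$ by the lemma applied inside $B$ (or trivially if $B=K_2$).

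For necessity (the ``only if'' direction) I would prove the contrapositive: if $G$ has two cut vertices then $vcfc(G)\ge 3$. Using the block--cut tree I would first extract a clean configuration: distinct cut vertices $p,q$ and vertices $x,y$ such that the component $X$ of $G-p$ containing $x$ avoids $q$, the component $Y$ of $G-q$ containing $y$ avoids $p$, and $X\cap Y=\varnothing$. Concatenating an $x$--$v$ path missing $p$ with a $v$--$y$ path missing $q$ shows both that $X\cap Y=\varnothing$ and that \emph{every} path from a vertex of $X$ to a vertex of $Y$ passes through both $p$ and $q$. Now suppose a conflict-free $2$-coloring $c$ exists and examine $c(p),c(q)$. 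If $c(p)=c(q)=\alpha$, then on any $X$--$Y$ path $\alpha$ occurs at least twice, so the other color must occur exactly once; hence $X$ and $Y$ cannot both contain a minority-colored vertex, and one of them, say $Y$, is entirely colored $\alpha$. Together with $c(q)=\alpha$ this makes $Y\cup\{q\}$ monochromatic, and then the pair $(y,q)$, whose every path stays inside $Y\cup\{q\}$, has no conflict-free path — a contradiction.

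The case $c(p)\ne c(q)$ is the crux and follows the same pattern. Writing $c(p)=\alpha,\ c(q)=\beta$, every $x$--$b$ path with $b\in Y$ carries $\alpha$ at $p$ and $\beta$ at $q$; so if $c(x)=\alpha$ then $\alpha$ already appears twice and $\beta$ must appear exactly once (only at $q$), forcing $c(b)=\alpha$ for all $b\in Y$, and symmetrically $c(y)$ fixes the color of all of $X$. Checking the four combinations of $c(x),c(y)\in\{\alpha,\beta\}$, each one is either internally inconsistent (e.g.\ $c(x)=\alpha$ forces $Y$ monochromatic in $\alpha$, contradicting $c(y)=\beta$) or makes $X\cup\{p\}$ or $Y\cup\{q\}$ monochromatic, again killing the pair $(x,p)$ or $(y,q)$. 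I expect the main obstacle to lie not in this coloring analysis, which is short, but in setting up the configuration so that $p$ and $q$ are genuinely independent bottlenecks — that is, in justifying $X\cap Y=\varnothing$ together with the fact that all $X$--$Y$ paths meet both cut vertices — since once this is secured the contradiction reduces to the monochromatic-region argument above.
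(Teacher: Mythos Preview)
The paper does not actually prove this theorem: it is quoted from Li et al.\ \cite{Li} without proof, so there is no argument in the paper to compare your proposal against. That said, your plan is essentially a correct self-contained proof of the result, and the two directions are handled by the right ideas.

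For the ``if'' direction, your routing lemma (any three vertices of a $2$-connected graph lie on a common path with the third one internal) and the single-distinguished-vertex coloring are exactly what is needed; the case of a unique cut vertex is handled correctly by observing that every block contains it.

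For the ``only if'' direction the strategy is right but your write-up has two soft spots you should tighten. First, the sentence ``Concatenating an $x$--$v$ path missing $p$ with a $v$--$y$ path missing $q$ shows both that $X\cap Y=\varnothing$ and \dots'' is garbled (there is no $v$ in this part of the argument) and does not establish what you claim. The disjointness $X\cap Y=\varnothing$ follows instead from a short contradiction: if $w\in X\cap Y$ then every $w$--$q$ path meets $p$ and every $w$--$p$ path meets $q$, and chasing a shortest $w$--$p$ path gives a $w$--$q$ path avoiding $p$. The fact that every $X$--$Y$ path uses both $p$ and $q$ is immediate from $X$ and $Y$ being components of $G-p$ and $G-q$, respectively. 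Second, your four-case analysis for $c(p)\neq c(q)$ is more tangled than necessary. What your argument actually yields is the clean statement that \emph{every} vertex of $Y$ must carry the color $c(x)$ and every vertex of $X$ must carry the color $c(y)$; since $x\in X$ and $y\in Y$, this forces $c(x)=c(y)=:\gamma$, and then either $X\cup\{p\}$ or $Y\cup\{q\}$ is monochromatic, killing the pair $(x,p)$ or $(y,q)$. Stating it this way replaces the four subcases by a single line.
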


So, a basic question arise: {\it what is the maximum value of the conflict-free vertex connection numbers of all graphs of order $n$ ?}

\vspace{2mm} It can be observed in \cite{Li} that for a nontrivial connected graph $G$, if $H$ is a spanning subgraph of $G$,
then $vcfc(H)\geq vcfc(G)$. In particular, for any spanning tree $T$ of $G$, $vcfc(T)\geq vcfc(G)$. Thus the maximum value of the conflict-free vertex connection numbers must be achieved by some tree of order $n$. It can be checked that $vcfc(K_{1, n-1})=2$ for any $n\geq 2$. In particular, Li et al. showed that

\begin{theorem}(Li et al. \cite{Li} )
For an integer $n\geq 2$, $vcfc(P_n)=\lceil log_2(n+1)\rceil$.
\end{theorem}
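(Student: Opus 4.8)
The plan is to prove the two inequalities $vcfc(P_n)\le \lceil \log_2(n+1)\rceil$ and $vcfc(P_n)\ge \lceil \log_2(n+1)\rceil$ separately, both by induction on the number of colors. The basic observation is that in a path the unique path joining $v_a$ and $v_b$ is the segment $v_a v_{a+1}\cdots v_b$, so a vertex-coloring of $P_n=v_1v_2\cdots v_n$ makes it conflict-free vertex-connected exactly when every subpath (equivalently, every set of consecutive vertices) carries some color that appears on exactly one of its vertices.

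For the upper bound I would show, by induction on $k\ge 1$, that if $m\le 2^k-1$ then $P_m$ admits such a coloring using at most $k$ colors. The case $k=1$ (so $m\le 1$) is trivial. For $k\ge 2$, let $v_i$ with $i=\lceil m/2\rceil$ be the middle vertex of $P_m$; then the segments $v_1\cdots v_{i-1}$ and $v_{i+1}\cdots v_m$ each have at most $\lfloor m/2\rfloor\le\lfloor (2^k-1)/2\rfloor=2^{k-1}-1$ vertices, so each can be colored by the induction hypothesis, both drawing from the common color set $\{1,\dots,k-1\}$; finally give $v_i$ the new color $k$. Any subpath missing $v_i$ lies inside one segment and is conflict-free by construction, while any subpath containing $v_i$ has color $k$ on exactly one vertex. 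Taking the least $k$ with $n\le 2^k-1$, namely $k=\lceil \log_2(n+1)\rceil$, yields $vcfc(P_n)\le\lceil \log_2(n+1)\rceil$.

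For the lower bound I would show, again by induction on $k$, that if $P_m$ has a conflict-free vertex-connected coloring using at most $k$ colors then $m\le 2^k-1$. For $k=1$ every subpath must be monochromatic, hence a single vertex, so $m\le 1$. For $k\ge 2$ and $m\ge 1$, apply the coloring to the whole path $P_m$: some color occurs on exactly one vertex $v_i$, and then the two segments $v_1\cdots v_{i-1}$ and $v_{i+1}\cdots v_m$ use only the remaining $k-1$ colors (the color of $v_i$ occurs nowhere else) while inheriting the conflict-free property, so by the induction hypothesis each has at most $2^{k-1}-1$ vertices; hence $m\le (2^{k-1}-1)+1+(2^{k-1}-1)=2^k-1$. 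Thus $vcfc(P_n)=k$ forces $n\le 2^k-1$, i.e.\ $k\ge\lceil \log_2(n+1)\rceil$, which together with the upper bound completes the proof.

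Neither direction hides a serious difficulty; the step most worth stating carefully is the one in the lower-bound argument where the two segments are forbidden the color of $v_i$ — this is exactly why the recursion drops one color — together with the elementary identity $\lfloor (2^k-1)/2\rfloor=2^{k-1}-1$, which is what keeps the color count tight on both sides. One should also make sure the small cases ($m\le 1$) of both inductions are handled, since they anchor the bound $2^k-1$.
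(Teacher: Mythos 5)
Your proof is correct. Note that the paper itself does not prove this statement --- it is quoted from Li et al.\ \cite{Li} as a known result --- so there is no in-paper proof to compare against line by line; your argument stands on its own. Both halves are sound: the reduction of conflict-free vertex-connectivity of a path to ``every set of consecutive vertices carries a uniquely occurring color'' is what makes the two inductions work, the arithmetic $\lfloor (2^k-1)/2\rfloor = 2^{k-1}-1$ is right, and the key step of the lower bound (the two segments flanking the unique-colored vertex $v_i$ avoid its color, hence use at most $k-1$ colors, and every subpath of a segment is a subpath of $P_m$, so they inherit the conflict-free property) is stated and justified. It is worth remarking that your upper-bound construction --- give a middle vertex a fresh color and recurse on the two halves --- is exactly the path case of the argument the paper uses for its main result (Theorem 2.2 via Lemma 2.1), where a vertex $v$ with $moc(T-v)\leq 2^{k-1}-1$ plays the role of your $v_{\lceil m/2\rceil}$. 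The lower bound is the genuinely path-specific half of the theorem, and your version of it is the standard and correct one.
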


A {\it $k$-ranking} of a connected graph $G$ is a labeling of its vertices with labels $1, \ldots, k$ such that every path between any two vertices with the same label $i$ in $G$ contains at least one vertex with label $j>i$. A graph $G$ is said to be $k$-rankable if it has a $k$-ranking. The minimum $k$ for which $G$ is $k$-rankable is denoted by $r(G)$.
Iyer et al. \cite{Iyer} showed that
for a tree of order $n\geq 3$, $r(T)\leq log_{\frac 3 2} \ n$.
Li et al. \cite{Li} showed that

\begin{theorem} (Li et al. \cite{Li} )
For a tree of order $n\geq 3$, $vcfc(T)\leq log_{\frac 3 2} \ n$.
\end{theorem}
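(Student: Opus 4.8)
The plan is to reduce the theorem to the bound on the vertex ranking number of trees quoted just above it (Iyer et al.), by proving that $vcfc(T)\le r(T)$ for every tree $T$. Granting this inequality, the result follows immediately: if $T$ has order $n\ge 3$, then $vcfc(T)\le r(T)\le \log_{3/2}n$.

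So the substance of the proof is the inequality $vcfc(T)\le r(T)$, and the idea is that a vertex ranking, read verbatim as a vertex-colouring, is already conflict-free vertex-connecting. Concretely, I would take an optimal ranking $c\colon V(T)\to\{1,\dots,r(T)\}$ and show it makes $T$ conflict-free vertex-connected. Fix two vertices $u,v$ and let $P$ be \emph{the} $u$--$v$ path in $T$, which is unique because $T$ is a tree. Let $i$ be the largest colour occurring on $P$. I claim the colour $i$ is used on exactly one vertex of $P$: if $x,y$ were two distinct vertices of $P$ with $c(x)=c(y)=i$, then the $x$--$y$ subpath of $P$ is also the unique $x$--$y$ path of $T$, so by the defining property of a ranking it would contain a vertex of colour greater than $i$; but that vertex lies on $P$, contradicting the choice of $i$. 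Hence the unique vertex of colour $i$ on $P$ makes $P$ conflict-free, and since $u,v$ were arbitrary, $c$ witnesses $vcfc(T)\le r(T)$.

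I do not expect a genuine obstacle here. The only point needing a moment's care is that a $k$-ranking constrains \emph{every} path joining two equally-labelled vertices, whereas conflict-freeness asks only for \emph{one} good path between each pair; these requirements coincide in a tree precisely because the path between two prescribed endpoints is unique, which is exactly where the tree hypothesis enters. Combining $vcfc(T)\le r(T)$ with the Iyer et al. bound $r(T)\le \log_{3/2}n$ for $n\ge 3$ then completes the proof; the hypothesis $n\ge 3$ cannot be dropped, since $vcfc(P_2)=2>\log_{3/2}2$.

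If instead one wanted a self-contained argument avoiding the citation to Iyer et al., the inequality $r(T)\le\log_{3/2}n$ can itself be obtained by induction on $n$: choose a centroid $v$ of $T$, so that every component of $T-v$ has at most $n/2$ vertices; assign $v$ the top colour, recursively rank the components (reusing the remaining colours across different components is harmless, since any path joining two different components must pass through $v$), and bound the recursion by $1+\log_{3/2}(n/2)<\log_{3/2}n$, handling components of order $1$ or $2$ and the base case $n=3$ by hand.
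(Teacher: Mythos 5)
Your proof is correct and follows exactly the route the paper intends: the paper states this theorem without proof, quoting it from Li et al., and the juxtaposition with the immediately preceding bound $r(T)\leq \log_{\frac 3 2} n$ of Iyer et al.\ makes clear that the intended argument is precisely your reduction $vcfc(T)\leq r(T)$ followed by that ranking bound. (One small remark: the inequality $vcfc(G)\leq r(G)$ actually holds for every connected graph $G$, since the ranking condition already forbids two occurrences of the maximum colour on the subpaths of any chosen $u$--$v$ path; the tree hypothesis is needed only for the Iyer et al.\ bound on $r(T)$, not for your key inequality.)
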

Further, Li et al. \cite{Li} conjectured that
\begin{conjecture}
For a connected graph $G$ of order $n$, $vcfc(G)\leq vcfc(P_n)$.
\end{conjecture}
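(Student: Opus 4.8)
The plan is to reduce the conjecture to trees and then prove it by induction on the order via a centroid decomposition. Since $vcfc(H)\ge vcfc(G)$ whenever $H$ is a spanning subgraph of $G$, for every connected graph $G$ of order $n$ we have $vcfc(G)\le vcfc(T)$ for any spanning tree $T$ of $G$; as $T$ has order $n$, it suffices to prove
\[ vcfc(T)\le \lceil \log_2(n+1)\rceil=vcfc(P_n) \]
for every tree $T$ of order $n$, the last equality being Theorem~1.2.

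First I would isolate the recursion that carries the argument, which is exactly where the tree structure is exploited: for any tree $T$ and any vertex $v$, if $C_1,\dots,C_m$ are the components of $T-v$, then $vcfc(T)\le 1+\max_{1\le i\le m}vcfc(C_i)$. To prove this, color each $C_i$ by an optimal conflict-free vertex-coloring using colors from $\{1,\dots,\max_i vcfc(C_i)\}$, and give $v$ a new color. Because the path between any two vertices of a tree is unique, two cases suffice: if $x$ and $y$ lie in the same $C_i$ their path stays inside $C_i$, hence is conflict-free by the coloring of $C_i$; otherwise the path passes through $v$, whose new color then occurs on exactly one of its vertices. So the resulting coloring is conflict-free vertex-connected and uses $1+\max_i vcfc(C_i)$ colors.

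Next I would take $v$ to be a centroid of $T$, so that every component of $T-v$ has order at most $\lfloor n/2\rfloor$. Writing $k=\lceil\log_2(n+1)\rceil$, equivalently $2^{k-1}\le n\le 2^k-1$, the elementary inequality $\lfloor n/2\rfloor\le 2^{k-1}-1$ holds. Hence each component $C_i$ has order $n_i\le 2^{k-1}-1<n$, so by the induction hypothesis $vcfc(C_i)\le\lceil\log_2(n_i+1)\rceil\le\lceil\log_2 2^{k-1}\rceil=k-1$. Feeding this into the recursion gives $vcfc(T)\le 1+(k-1)=k=vcfc(P_n)$, which closes the induction; the base cases ($n\le 2$, together with single-vertex components needing one color) are immediate. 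Having established the tree case, the reduction in the first paragraph finishes the conjecture, and since $P_n$ is itself a tree it also shows that $\max\{vcfc(G):|V(G)|=n\}=vcfc(P_n)$.

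The argument is short, so its content lies in the two observations above, and the point I expect to be the crux is the exact matching of the centroid bound $\lfloor n/2\rfloor$ with the threshold $2^{k-1}-1$ dictated by $vcfc(P_n)$: a coarser split, for instance into pieces of size about $2n/3$ (all that the vertex-ranking bound $r(T)\le\log_{3/2}n$ behind Theorem~1.3 guarantees), would only give $vcfc(T)\le\log_{3/2}n$, which is strictly weaker than $vcfc(P_n)$. The reason a balanced \emph{single}-vertex cut is enough here is that in a tree we only need the \emph{unique} path between two vertices to be conflict-free, whereas a $k$-ranking forces \emph{every} path to have its maximum color appear once; I would therefore take care to verify that the recursion lemma really uses only this weaker requirement, and that a centroid always yields a split fine enough for the induction to terminate exactly at $\lceil\log_2(n+1)\rceil$.
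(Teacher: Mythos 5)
Your proposal is correct and follows essentially the same route as the paper: induction on $n$, splitting the tree at a balanced cut vertex whose removal leaves components of order at most $2^{k-1}-1$, coloring that vertex with a fresh color, and reducing the general graph case to spanning trees. The only cosmetic difference is that you invoke the classical centroid bound $\lfloor n/2\rfloor$ directly, whereas the paper proves its balanced-separator lemma only for odd $n$ (by the same ``move toward the largest component'' argument) and handles general $n$ by padding with pendant vertices up to order $2^{k}-1$.
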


The aim of this note is to prove the conjecture. We refer to \cite{Chang1, Chang2, Che, Chei, Even, Pach} for some relevant works on conflict-free coloring of graphs.

\section{\large The proof}

We begin with the following key lemma. For convenience, we denote by $moc(T-v)$ the maximum value of the orders of all components of $T-v$.
\begin{lemma} Let $T$ be a tree of order $n\geq 3$. If $n$ is odd, then there exists
a vertex $v$ with $moc(T-v)\leq \frac {n-1} 2$.
\end{lemma}

\begin{proof}

\vspace{2mm}

Choose a vertex $v_0\in V(T)$ such that $moc(T-v_0)=\min\{moc(T-v)|\ v$ run over all non-leaf vertices of $T\}$. We claim that $v_0$ is the vertex $v$, as we required. If it is not, then there exists a component of $T-v_0$, say $T_1$, has order $n_1>\frac {n-1} 2$. Note that $n_1=moc(T-v_0)$. Let $v_1$ be the neighbor of $v_0$ in $T_1$.
Let us consider the orders of the components of $T-v_1$. The component of $T-v_0v_1$ containing $v_0$ is a component of $T-v_1$ having order with $n-n_1<n-\frac {n-1} 2=\frac {n+1} 2$ (implying that $n-n_1\leq \frac {n-1} 2$).
Moreover, since all other components of $T-v_1$ is a proper subgraph of $T_1$, their orders are less than the order of $T_1$. It follows that $moc(T-v_1)<n_1=moc(T-v_0)$, contradicting the choice of $v_0$. This shows that the claim is true, and thus the result follows.

\end{proof}

Now we are ready to prove Conjecture 1.4.
\begin{theorem} For a tree $T$ of order $n$, $vcfc(T)\leq vcfc(P_n)$.
\end{theorem}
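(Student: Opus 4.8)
The plan is to prove the bound $vcfc(T) \le vcfc(P_n) = \lceil \log_2(n+1)\rceil$ by induction on $n$, using Lemma 2.1 (and its even-order analogue) to find a good ``centroid-like'' vertex $v$ whose removal breaks $T$ into components each of order at most roughly $n/2$. Concretely, for $n\ge 3$ I would first pin down a balanced separating vertex: if $n$ is odd, Lemma 2.1 gives $v$ with $moc(T-v)\le \tfrac{n-1}{2}$; if $n$ is even, a nearly identical argument (or applying the lemma to $T$ with a pendant vertex added and then removed) yields $v$ with $moc(T-v)\le \tfrac{n}{2}$. Let $T_1,\dots,T_t$ be the components of $T-v$, each of order $n_i \le \lceil n/2 \rceil - [n\text{ even}]$, more precisely $n_i \le \lfloor (n-1)/2\rfloor$ when $n$ is odd and we should check the even case gives $n_i \le n/2$; the key numeric fact I want is $\lceil \log_2(n_i+1)\rceil \le \lceil \log_2(n+1)\rceil - 1$ for every $i$.

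The main construction: color $T$ with $k := \lceil\log_2(n+1)\rceil$ colors as follows. Recursively, each component $T_i$ admits a conflict-free vertex-connected coloring with $vcfc(T_i) \le \lceil \log_2(n_i+1)\rceil \le k-1$ colors, using the color set $\{1,\dots,k-1\}$. Give $v$ the fresh color $k$, which appears nowhere else in $T$. Now I claim this $k$-coloring makes $T$ conflict-free vertex-connected. For two vertices $x,y$ in the same component $T_i$, the conflict-free $x$–$y$ path inside $T_i$ from the recursive coloring still works, since adding $v$ (with a color not used on that path) changes nothing. For $x\in T_i$, $y\in T_j$ with $i\ne j$, the unique $x$–$y$ path in $T$ passes through $v$, and $v$ carries color $k$ which is used exactly once on the whole path (it occurs nowhere else in $T$), so that path is automatically conflict-free. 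For $x\in T_i$ and $y=v$: take any $x$–$v$ path in $T$; again $v$'s color $k$ is unique on it. So every pair is connected by a conflict-free path, establishing $vcfc(T)\le k$.

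I would then handle the small base cases $n=1,2$ (trivial: $vcfc \le 2$) and verify the arithmetic lemma $\lceil\log_2(n_i+1)\rceil \le \lceil\log_2(n+1)\rceil - 1$. For $n$ odd and $n_i \le (n-1)/2$: $n_i+1 \le (n+1)/2$, so $\log_2(n_i+1) \le \log_2(n+1) - 1$ and the ceilings behave as required. For $n$ even I need $moc(T-v) \le n/2$ so that $n_i+1 \le n/2+1 \le (n+2)/2$; here one must be slightly careful since $\lceil\log_2((n+2)/2)\rceil = \lceil\log_2(n+1)\rceil - 1$ may fail exactly when $n+1$ is a power of two — but then $n$ is odd, so this edge case does not arise for even $n$, and I will double-check the boundary $n$ values by hand.

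The step I expect to be the main obstacle is the even-order version of Lemma 2.1 together with getting the $\pm 1$ in the logarithmic inequality exactly right at the powers-of-two boundaries; the coloring construction itself (fresh color on the separator, recurse on the pieces) is clean and the conflict-free property for cross-component and vertex-to-separator pairs is essentially immediate because the separator's color is globally unique. A secondary point to verify is that the recursion is well-founded — each $n_i < n$ strictly (true since $v$ is removed), so the induction closes.
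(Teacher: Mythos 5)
Your proposal is correct and follows essentially the same route as the paper: induction on $n$, Lemma 2.1 (with a padding trick to handle parity) to find a vertex $v$ with all components of $T-v$ of order at most about $n/2$, recursive colorings of the components from $\{1,\dots,k-1\}$, and a globally unique fresh color $k$ on $v$. The only cosmetic difference is that the paper pads $T$ up to $2^k-1$ vertices and works with the inequality $2^{k-1}\le n\le 2^k-1$ instead of your explicit ceiling-of-logarithm arithmetic, which you handle correctly.
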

\begin{proof}
We show it by induction on $n$.
The result is trivially true when $n=2$, and now assume that $n\geq 3$. Then there
exists an integer $k\geq 2$ such that $2^{k-1}\leq n\leq 2^k-1$. Let $T$ be a spanning tree of $G$. As we have seen before, $vcfc(G)\leq vcfc(T)$. By Theorem 1.2, $vcfc(P_n)=k$. So it suffices to show that $vcfc(T)\leq k$.

By Lemma 2.1, there exists a vertex $v\in V(G)$ with $moc(T-v)\leq 2^{k-1}-1$ (If necessary, by adding some pendent vertices to $T$, one can make the resulting tree $T'$ have $2^{k-1}-1$ vertices, and apply Lemma 2.1 to $T'$). Let $T_1, \ldots, T_l$ be all components of $T-v$, and $n_i=|V(T_i)|$ for each $i$. By the induction hypothesis, $vcfc(T_i)\leq k-1$ for each $i$.
Taking a conflict-free coloring of $T_i$ using colors in $\{1, \ldots, k-1\}$ for each $i$, and color the vertex $v$ by $k$, we obtain a conflict-free coloring of $T$ using colors in $\{1, \ldots, k\}$. This proves $vcfc(T)\leq k$, and thus $vcfc(G)\leq vcfc(P_n)$.
\end{proof}

\section{\large Further research}

In this note, we focuss on the conflict-free vertex-connection number, and combining some known results, we have shown that for a connected graph of order $n$, $$2\leq vcfc(G)\leq \lceil log_2(n+1)\rceil,$$ where the lower bound can be achieved by 2-connected graphs of order $n$ and the upper bound can be achieved by $P_n$.

In \cite{Czap}, Czap et al. determined the conflict-free connection number of all graphs by showing that for a noncomplete 2-connected graph $G$, $cfc(G)=2$.
It was further extended in \cite{Chang1} by Chang et al.  showing that
for a noncomplete 2-edge-connected graph $G$, $cfc(G)=2$.

Clearly, for an integer $n\geq 2$, $K_n$ is the unique connected graph $G$ of order $n$
with $cfc(G)=1$. On the other hand, $cfc(K_{1,n-1})=n-1$. Observe that for a nontrivial connected graph $G$, if $H$ is a spanning subgraph of $G$,
then $cfc(H)\geq cfc(G)$. In particular, for any spanning tree $T$ of $G$, $cfc(T)\geq cfc(G)$. Thus the maximum value of the conflict-free connection numbers must be achieved by some tree of order $n$. Actually, (see \cite{Li}) for a nontrivial connected graph $G$ of order $n$, $$1\leq cfc(G)\leq n-1,$$ with the left hand side of equality if and only if $G\cong K_n$, and with the right hand side of equality if and only if $G\cong K_{1, n-1}$.

It is an interesting problem to decide that among all trees of order $n$, which one has the least conflict-free connection number ? Czap et al. \cite{Czap} showed that
for an integer $n\geq 2$, $cfc(P_n)=\lceil log_2\ n\rceil$.
We pose the following conjecture.

\begin{conjecture}
For a tree $T$ of order $n$, $cfc(T)\geq \lceil log_2\ n\rceil$.
\end{conjecture}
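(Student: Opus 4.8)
The plan is to recast the statement equivalently as: \emph{if a tree $T$ admits a conflict-free edge-coloring with $k$ colors, then $|V(T)|\le 2^k$}. These are the same, since $cfc(P_n)=\lceil\log_2 n\rceil$ and $\lceil\log_2 n\rceil\le k$ exactly when $n\le 2^k$. I would prove the reformulation by induction on $k$; the case $k=1$ is immediate, because with one color a path is conflict-free only if it has a single edge, so $T\cong K_2$ and $|V(T)|\le 2$. In the inductive step, fix a conflict-free coloring $c$ of $T$ with color set $\{1,\dots,k\}$, write $E_a$ for the class of color $a$, and note two elementary facts: (i) in any component $C$ of $T-E_a$ no path meets the color $a$, so $c$ restricted to $C$ is conflict-free with at most $k-1$ colors and hence $|V(C)|\le 2^{k-1}$ by induction; and (ii) the edges at any vertex receive pairwise distinct colors, since the two-edge path between two of its neighbours must be conflict-free, so that $\Delta(T)\le k$.

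The easy subcase is when some color $a$ is used on just one edge $e$: then $T-e$ has exactly two components, each conflict-free colored with at most $k-1$ colors by (i), so $|V(T)|\le 2\cdot 2^{k-1}=2^k$. The substance is the case where every color is used on at least two edges, and there I would aim for a decomposition lemma: \emph{there is an edge $e^{\ast}=xy$ such that each of the two components of $T-e^{\ast}$ already omits some color of $c$}; granting this, the induction closes as above. To build $e^{\ast}$, consider for each color $a$ the smallest subtree $H_a$ of $T$ containing all edges of $E_a$: an end-edge of $H_a$ (one incident with a degree-one vertex of $H_a$) is necessarily $a$-colored, and deleting it isolates a side of $T$ with no $a$-colored edge. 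One would then try to force two colors to have such ``pendant'' cuts on opposite sides of one common edge, via an extremal argument over $H_1,\dots,H_k$ anchored at a centroid $v_0$ of $T$ (a vertex minimizing $moc(T-v_0)$, as used in the proof of Lemma~2.1).

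I expect this decomposition lemma to be the main obstacle, and it may fail verbatim: if every edge of $T$ had a side on which all $k$ colors occur, a non-backtracking walk that always steps into a color-complete side cannot reach a leaf (a leaf-side is a single vertex), so it halts at a vertex all of whose sides are color-complete, which I see no way to exclude directly --- so a more robust recursion is probably needed. Two fallbacks look plausible. First, replace the two-way split by a weighted/potential version of the induction that may break $T$ into the $|E_a|+1\ge 3$ components of $T-E_a$: each has order at most $2^{k-1}$, and it remains to show that the long paths meeting three or more components --- which contain at least two $a$-colored edges and so must be made conflict-free by a color other than $a$ --- keep the components small enough in aggregate that their orders still sum to at most $2^k$; the tree obtained from $T$ by contracting the components (whose edges are exactly the $a$-colored ones) is the natural bookkeeping device. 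Second, split by shape: if $\operatorname{diam}(T)=d$ is large, restricting $c$ to a diametral path already gives $cfc(T)\ge cfc(P_{d+1})=\lceil\log_2(d+1)\rceil$; if $T$ is shallow it is ``bushy'' and (ii) gives $\Delta(T)\le cfc(T)$; the critical middle regime is the balanced one, exemplified by the complete binary tree $B_h$ with $|V(B_h)|=2^{h+1}-1$, where a direct case analysis around the root --- already forcing $cfc(B_3)\ge 4$ --- should give $cfc(B_h)\ge h+1$. Extending such an analysis to all trees seems to require showing that near-optimal conflict-free colorings are structurally rigid near a centroid, so that a long path through it must repeat every color; matching that rigidity to the balanced trees, on which the bound leaves almost no slack, is the delicate point.
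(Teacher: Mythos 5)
First, a point of context: the statement you set out to prove is Conjecture 3.1 of the paper. The authors pose it as an open problem and offer no proof, so there is no argument in the paper to compare yours against; the only question is whether your attempt stands on its own. It does not, and to your credit you say so. The scaffolding is correct and worth keeping: the equivalence of the conjecture with ``a conflict-free $k$-coloring of a tree forces $|V(T)|\le 2^k$'', the base case $k=1$, fact (i) (a component of $T-E_a$ inherits a conflict-free coloring with at most $k-1$ colors, hence has at most $2^{k-1}$ vertices by induction), fact (ii) ($\Delta(T)\le k$), the observation that a pendant edge of the minimal subtree $H_a$ is $a$-colored and cuts off an $a$-free side, and the subcase of a singleton color class, which splits $T$ into two parts of order at most $2^{k-1}$ each. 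But the entire content of the theorem lives in the remaining case, and there nothing is proved: the decomposition lemma (an edge $e^{\ast}$ both of whose sides omit a color) is only wished for, and your own analysis of the ``step into a color-complete side'' walk correctly identifies why the obvious existence argument fails --- the walk terminates at a vertex all of whose incident cuts are color-complete on the side containing that vertex, and nothing excludes such a vertex.

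The two fallbacks are not arguments but restatements of the difficulty. Splitting along an entire color class $E_a$ with $|E_a|\ge 2$ produces at least three components, each of order at most $2^{k-1}$, and $3\cdot 2^{k-1}>2^k$; the ``bookkeeping'' needed to recover the lost factor of $2$ is exactly the missing theorem. The diameter/degree dichotomy collapses in precisely the regime you name: for the complete binary tree $B_h$ one has $n=2^{h+1}-1$, diameter $2h$ and maximum degree $3$, so the diametral-path bound yields only roughly $\log_2 h$ colors and the degree bound only $3$, against a target of $h+1$; the ``direct case analysis around the root'' that is supposed to bridge this is the hard part, and extending it from $B_h$ to arbitrary trees (where the claimed structural rigidity near a centroid is unformulated, let alone proved) is harder still. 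In short: the reductions are sound, the load-bearing step is absent, and the statement remains, as in the paper, a conjecture.
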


Another interesting problem, posed by Li \cite{Li2}, is the complexity for determining the conflict-free connection number or the conflict-free vertex-connection number of a graph.
Yang \cite{Yang} designed a polynomial-time algorithm to determine the conflict-free connection number of a tree.



\end{document}